\documentclass[12pt]{article}
\usepackage{authblk}
\usepackage{hyperref}
\usepackage{amssymb}
\usepackage{amsthm}
\usepackage{amsmath}
\usepackage{multirow}
\usepackage{float}
\usepackage{graphicx}
\usepackage{listings}
\usepackage{MnSymbol}
\usepackage{nicematrix}
\usepackage{titlesec}
\usepackage{xcolor}
\newtheorem{theorem}{Theorem}[section]

\newtheorem{proposition}[theorem]{Proposition}

\newtheorem{example}[theorem]{Example}

\newtheorem{remark}[theorem]{Remark}

\begin{document}
\title{\Large\bfseries Linearization Problem for Third-Order ODEs with Four- and Five-Dimensional Lie Symmetry Algebras under Contact Transformations}

\author[1]{Omar A. Abuloha\thanks{1227001@student.birzeit.edu}}
\author[1]{Marwan Aloqeili\thanks{maloqeili@birzeit.edu}}
\author[1]{Ahmad Y. Al-Dweik\thanks{aaldweik@birzeit.edu}}
\author[2]{F. M. Mahomed\thanks{Corresponding Author: Fazal.Mahomed@wits.ac.za}}
\affil[1]{Department of Mathematics, Birzeit University, Ramallah, Palestine}
\affil[2]{School of Computer Science and Applied Mathematics, 
University of the Witwatersrand, Johannesburg, Wits 2050,  South Africa}
\maketitle
\begin{abstract}
Using Cartan’s equivalence method, invariant coframes are constructed for two branches of rank one and zero, which characterize linearizable third-order ODEs under contact transformations with four- and five-dimensional Lie symmetry algebras, respectively. A procedure for deriving the corresponding contact transformations is also presented, along with illustrative examples.
\end{abstract}
\bigskip
Keywords:  Cartan's equivalence method, Linearization problem, Third-order ODEs,  Contact transformations.
\section{Introduction}
 Lie established practical and algebraic linearization criteria for scalar second-order ordinary differential equations (ODEs) via invertible point transformations \cite{Lie, Lie*}. He proved that such linearizable equations possess a degree no higher than three with respect to the first derivative, while providing coefficient-based criteria and algebraic conditions for the reduction to linear form \cite{Lie}. A scalar second-order ODE admitting the maximal eight point symmetries is linearizable. Moreover, any equation admitting a rank-one, two-dimensional Lie algebra of point symmetries is linearizable by a point transformation (see, e.g., Mahomed \cite{Mahomed}). The principles underlying contact transformations were first formulated by  Lie and  Engel \cite{Lie1,Lie2,Lie3,Lie4,Lie5}.  Lie and  Scheffers \cite{Lie6},  Yumaguzhin \cite{Yumaguzhin1997,Yumaguzhin1996}, and  Wafo et al. \cite{Soh2002} studied the local classification of third-order linear ODEs under contact transformations. They showed that such equations admit at most a ten-dimensional contact symmetry algebra, with equality if and only if they are locally equivalent to $u'''=0$. See also \cite{Svi1,Svi2,Ibragimov1977}. The Laguerre–Forsyth (see \cite{Mahomed1990,Mahomed}) canonical form for scalar linear third-order ODEs is
\begin{equation}\label{1.1}
u''' + a(x)u = 0. \end{equation}
If $a \not\equiv 0$, equation (\ref{1.1}) admits a four- or five-dimensional Lie symmetry algebra. Chern \cite{Chern1940} first applied Cartan’s equivalence method to linearization via contact transformations, deriving conditions for equivalence to (\ref{1.1}) in the cases $a\equiv0$ and $a\equiv1$. Neut and Petitot \cite{Neut2002} later extended these results to arbitrary $a(x)$. They showed that third-order ODEs admitting four- and five-dimensional Lie symmetry algebras under contact transformations are characterized by a nonvanishing W$\ddot{\textrm{u}}$nschmann relative invariant \cite{Neut2002,Wnschmann1905}, together with the vanishing of $14$ additional invariants. Their work applies to the general linear form without addressing the construction of contact transformations. Ibragimov and Meleshko \cite{Ibra} investigated the linearization problem for third-order ODEs using a direct approach via point and contact transformations. They also addressed the construction of transformations to the Laguerre–Forsyth form via both types of transformations. Nonetheless, those transformations were derived as solutions to a nonlinear system of partial differential equations (PDEs). Their work applies to the Laguerre–Forsyth form without distinguishing between canonical forms with four- and five-dimensional Lie symmetry algebras. Al-Dweik et al. \cite{Dweik2019,Dweik2018_2} applied a new Cartan-based framework to the linearization problem for third-order ODEs admitting four- and five-dimensional Lie symmetry algebras via point transformations, yielding invariant coframes and auxiliary functions for constructing transformations by solving systems of linear or Riccati PDEs. We generalize the results in \cite{Dweik2019,Dweik2018_2} to analyze the equivalence problem for linearizable third-order ODEs admitting four- and five-dimensional Lie symmetry algebras under contact transformations. It is noteworthy that the derived theorems are applicable to both proper contact and point transformations. Moreover, we clearly distinguish between linear canonical forms admitting four and those admitting five Lie symmetries. We also provide a method for constructing contact transformations that reduce the equation to its linear canonical form, based on invariant coframes obtained via Cartan’s equivalence method, by solving systems of linear or Riccati PDEs. It should be mentioned that the canonical forms of linearizable third-order ODEs admitting four- and five-dimensional Lie point symmetry algebras in \cite{Dweik2019,Dweik2018_2} are not equivalent under contact transformations. Therefore, we consider the same canonical forms to study linearization problem for third-order ODEs with four- and five-dimensional Lie symmetry algebras under contact transformations, which are summarized in the following Table. For the remainder of this discussion, we adopt the notation $u'=p$ and $u''=q$.
\begin{table}[H]
\centering
\renewcommand{\arraystretch}{1.6}
\begin{tabular}{|l|l|l|}
\hline
\textbf{Algebra} & \textbf{Point symmetry realizations} & \textbf{Representative equations} \\  \hline\hline

$L_{4;1}$&$X_1=\partial_u, X_2=u\partial_u, X_3=f_1(x)\partial_u, X_4=f_2(x)\partial_u$&$u'''=a^3(x)u$, \\
&$f_i'''+a(x)f_i'=0,\,i=1,2$&$a$ not const,~$ \left(\frac{2aa''-3a'^2}{a^4}\right)_x\neq 0$ \\ \hline

$L_{5;1}$&$X_1=\partial_x, X_2=u\partial_u, X_3=f_1(x)\partial_u, X_4=f_2(x)\partial_u,$&$u'''=sp+u$, \\
& $X_5=f_3(x)\partial_u$,$f_i'''+a_1f_i'+a_0f_i=0$,$i=1,\ldots,3$& $s$ constant\\ \hline
\end{tabular}
\caption{Canonical forms of linearizable third-order ODEs under contact transformations admitting  four- and five-dimensional Lie  symmetry algebras}
\end{table}
The paper is organized as follows. Section 2 applies Cartan’s equivalence method to third-order ODEs under contact transformations, yielding invariant coframes for two branches that characterize the canonical forms given in Table 1. Section 3 presents the Principal theorems. Section 4 describes a procedure for constructing the corresponding contact transformations based on these invariant coframes, with illustrative examples. A brief conclusion is given at the end.
\section{Implementation of Cartan’s equivalence method}
\setcounter{equation}{0}
Definitions, preliminaries, notation, and key results relevant to this section can be found in \cite{Olver1995,Neut2003}.
Consider $\left(x, u, p = u', q = u''\right) \in \mathbb{R}^4$ as local coordinates for the second-order jet space $\mathbf{J}^2$.  Throught this paper, the 1-forms $\pi^{\prime\kappa},~\kappa=1, \ldots, 9$,   denote the modified Maurer-Cartan forms.  On the manifold $M=J^2$, we introduce the following base coframe
\begin{equation}\label{2.1}
\left(\begin{array}{l}
\omega^1\\\omega^2\\\omega^3\\\omega^4\\
\end{array}\right)
=\left(\begin{array}{c}
du-pdx\\
dp-qdx\\
dq-fdx\\
dx\\
\end{array}\right),
\end{equation}
In this paper, we consider the equivalence  of
\begin{equation}\label{2.3}
\begin{array}{l}
u^{\prime \prime \prime}=f\left(x, u, u^{\prime},u^{\prime \prime} \right), \quad \bar{u}^{\prime \prime\prime}=\bar{f}\left(\bar{x}, \bar{u}, \bar{u}^{\prime},\bar{u}^{\prime \prime}\right),
\end{array}
\end{equation}
under a contact transformation
\begin{equation}\label{2.4}
\bar{x}=\varphi \left( x,u,p \right),~\bar{u} =\psi \left( x,u,p  \right), ~\bar{p}=\chi(x,u,p),\\
\end{equation}
with the contact condition $d\bar{u}-\bar{p}~d\bar{x}=\lambda(du-p~dx)$ for some function $\lambda(x,u,p)$ and non-zero Jacobian.
This equivalence problem can be expressed, with respect to the base coframe (\ref{2.1}), as the following fundamental equivalence conditions.
\begin{equation}\label{2.5}
\Phi^*\left(\begin{array}{c}
\bar{\omega}^1 \\
\bar{\omega}^2 \\
\bar{\omega}^3\\
\bar{\omega}^4
\end{array}\right)=\left(\begin{array}{cccc}
a_1 & 0 & 0&0 \\
a_2 & a_3 & 0 &0\\
a_4 & a_5 & a_6&0\\
a_7&a_8&0&a_9
\end{array}\right)\left(\begin{array}{c}
\omega^1 \\
\omega^2 \\
\omega^3\\
\omega^4
\end{array}\right),
\end{equation}
for functions $a_i(x,u,p,q),~ i=1,...,9$,  where $\Phi^*$ is the pullback arising from the first prolongation of the contact transformation (\ref{2.4}). Consequently, the corresponding structure group is a nine-dimensional Lie group
\begin{equation}\label{2.5}
G=\left\{
\left(\begin{array}{cccc}
a_1 & 0 & 0&0 \\
a_2 & a_3 & 0 &0\\
a_4 & a_5 & a_6&0\\
a_7&a_8&0&a_9
\end{array}\right)  \Bigg \vert a_1a_3a_6a_9\neq 0
\right\}.
\end{equation}
Let $\theta$ be the lifted coframe defined by
\begin{equation}\label{2.6}
\begin{array}{ll}
\left(\begin{array}{c}
\theta^1, ~\theta^2,~\theta^3,~\theta^4
\end{array}\right)^T=S \left(\begin{array}{c}
\omega^1,~
\omega^2,~
\omega^3,~
\omega^4
\end{array}\right)^T,
\end{array}
\end{equation}
where $S\in G$. 
After completing the absorption step, the first structure equation for (\ref{2.6}) can be written as
\begin{equation}\label{2.7}
\begin{array}{ll}
d\left(\begin{array}{c}
\theta^1 \\
\theta^2 \\
\theta^3\\
\theta^4
\end{array}\right)=\left(\begin{array}{cccc}
\pi^{\prime1} & 0 & 0&0 \\
\pi^{\prime2}&  \pi^{\prime3} & 0&0 \\
\pi^{\prime4}& \pi^{\prime5} & \pi^{\prime6}&0\\
\pi^{\prime7}&\pi^{\prime8}&0&\pi^{\prime9}
\end{array}\right) \wedge\left(\begin{array}{c}
\theta^1 \\
\theta^2 \\
\theta^3\\
\theta^4
\end{array}\right)+\left(\begin{array}{c}
T_{24}^1~ \theta^2 \wedge \theta^4 \\
T_{34}^2~\theta^3\wedge\theta^4 \\
0\\
0
\end{array}\right),
\end{array}
\end{equation}
The essential torsion coefficients can be written explicitly as $T_{24}^1=-\frac{a_1}{a_3 a_9}$ and $T_{34}^2=-\frac{a_3}{a_6 a_9}$
which can be normalized to $-1$ via an appropriate choice of the group parameters 
$a_6 = \frac{a_3^2}{a_1}$ and $a_9 = \frac{a_1}{a_3}$, leading to a reduction of the structure group 
\begin{equation}\label{2.8}
{G}_1=\left\{
\left(\begin{array}{cccc}
a_1 & 0 & 0&0 \\
a_2 & a_3 & 0 &0\\
a_4 & a_5 & \frac{a^2_3} {a_1}&0\\
a_7&a_8&0&\frac{a_1}{a_3}
\end{array}\right)
 \Bigg \vert a_1a_3\neq 0
\right\},
\end{equation}
which  yields the adapted coframe (\ref{2.6}) with $S\in G_1$. \\

In the \emph{second iteration} of the reduction process, after absorption, the structure equations take the following form
\begin{equation}\label{2.10}
d\left(\begin{array}{c}
\theta^1 \\
\theta^2 \\
\theta^3\\
\theta^4
\end{array}\right)=\left(\begin{array}{cccc}
 \pi^{\prime1} & 0 & 0&0 \\
\pi^{\prime2} &  \pi^{\prime3} & 0&0 \\
\pi^{\prime4} &\pi^{\prime5}& 2\pi^{\prime3}-\pi^{\prime1}&0\\
\pi^{\prime6}&\pi^{\prime7}&0&\pi^{\prime1}-\pi^{\prime3}
\end{array}\right) \wedge\left(\begin{array}{c}
\theta^1 \\
\theta^2 \\
\theta^3\\
\theta^4
\end{array}\right)+\left(\begin{array}{c}
- \theta^2 \wedge \theta^4 \\
 - \theta^3 \wedge \theta^4 \\
T_{34}^3~\theta^3 \wedge \theta^4 \\
0
\end{array}\right).
\end{equation}
The essential torsion coefficient can be written explicitly as  $T_{34}^3=\frac{a_3^2I_1-3a_1a_5+3a_2a_3}{a_1a_3}$
which can be reduced to zero by an appropriate normalization of the group parameter 
  $a_5=\frac{a_2a_3}{a_1}+\frac{a_3^2}{3a_1}I_1$, where $I_1=-f_q$. 
Accordingly, the structure group takes the following reduced form
\begin{equation}\label{2.8}
{G}_2=\left\{
\left(\begin{array}{cccc}
a_1 & 0 & 0&0 \\
a_2 & a_3 & 0 &0\\
a_4 &\frac{a_2a_3}{a_1}+\frac{a_3^2 }{3a_1}I_1& \frac{a^2_3} {a_1}&0\\
a_7&a_8&0&\frac{a_1}{a_3}
\end{array}\right)
 \Bigg \vert a_1a_3\neq 0
\right\},
\end{equation}
which  yields the adapted coframe (\ref{2.6}) with $S\in G_2$. \\

In the third iteration of the reduction process, after absorption, the structure equations are expressed in the following form
\begin{equation}\label{2.7}
\begin{array}{ll}
d\left(\begin{array}{c}
\theta^1 \\
\theta^2 \\
\theta^3\\
\theta^4
\end{array}\right)=\left(\begin{array}{cccc}
\pi^{\prime1} & 0 & 0&0 \\
\pi^{\prime2}&  \pi^{\prime3} & 0&0 \\
 \pi^{\prime4}& \pi^{\prime2} & 2\pi^{\prime3}-\pi^{\prime1}&0\\
 \pi^{\prime5}&\pi^{\prime6}&0&\pi^{\prime1}-\pi^{\prime3}
\end{array}\right) \wedge\left(\begin{array}{c}
\theta^1 \\
\theta^2 \\
\theta^3\\
\theta^4
\end{array}\right)+\left(\begin{array}{c}
-\theta^2 \wedge \theta^4 \\
-\theta^3\wedge\theta^4 \\
T_{24}^3~\theta^2 \wedge \theta^4\\
0
\end{array}\right).
\end{array}
\end{equation}
The essential torsion coefficient can be written explicitly as  $T_{24}^3=\frac{a_3^2I_2-2a_1a_4+a_2^2}{a_1^2}$, 
which can be reduced to zero by an appropriate normalization of the group parameter  $a_4=\frac{a_2^2}{2a_1}+\frac{a_3^2}{2a_1}I_2$, where  $I_2=-\frac{2}{9}I_1^2-f_p-\frac{1}{3}\hat{D}_xI_1$. 
Consequently, the structure group is reduced  to
\begin{equation}\label{2.8}
{G}_3=\left\{
\left(\begin{array}{cccc}
a_1 & 0 & 0&0 \\
a_2 & a_3 & 0 &0\\
\frac{a_2^2}{2a_1}+\frac{a_3^2}{2a_1}I_2 &\frac{a_2a_3}{a_1}+\frac{a_3^2}{3a_1}I_1 & \frac{a^2_3} {a_1}&0\\
a_7&a_8&0&\frac{a_1}{a_3}
\end{array}\right)
 \Bigg \vert a_1a_3\neq 0
\right\},
\end{equation}
which  yields the adapted coframe (\ref{2.6}) with $S\in G_3$. \\

Proceeding to the \emph{fourth iteration} of the reduction algorithm, one finds that, after absorption, the structure equations become
\begin{equation}\label{2.7}
\begin{array}{ll}
d\left(\begin{array}{c}
\theta^1 \\
\theta^2 \\
\theta^3\\
\theta^4
\end{array}\right)=\left(\begin{array}{cccc}
\pi^{\prime1} & 0 & 0&0 \\
\pi^{\prime2}&  \pi^{\prime3} & 0&0 \\
0& \pi^{\prime2} & 2\pi^{\prime3}-\pi^{\prime1}&0\\
\pi^{\prime4}&\pi^{\prime5}&0&\pi^{\prime1}-\pi^{\prime3}
\end{array}\right) \wedge\left(\begin{array}{c}
\theta^1 \\
\theta^2 \\
\theta^3\\
\theta^4
\end{array}\right)+\left(\begin{array}{c}
-\theta^2 \wedge \theta^4 \\
-\theta^3\wedge\theta^4 \\
T_{14}^3~\theta^1 \wedge \theta^4\\
0
\end{array}\right).
\end{array}
\end{equation}
The essential  torsion coefficient is
\begin{equation}
T^3_{14}=\frac{a_3^3}{a_1^3}I_3,\\
\end{equation}
where 
\begin{equation}
I_3=-\frac{1}{3}I_1I_2-f_u-\frac{1}{2}\hat{D}_xI_2. 
\end{equation}
As a result, we obtain the following branch.
\subsection{Branch $I_{3}\ne 0$ :}
The group parameter $a_{3}$ an be  normalized as
\begin{equation}
a_{3}=\frac{a_1}{J_3},
\end{equation}
 by translating $T^3_{14}$ to one, where $I_3=J_3^3$.\\
 
Proceeding with another iteration of reduction and absorption leads to the following structure equations
\begin{equation}\label{2.7}
\begin{array}{ll}
d\left(\begin{array}{c}
\theta^1 \\
\theta^2 \\
\theta^3\\
\theta^4
\end{array}\right)=\left(\begin{array}{cccc}
\pi^{\prime1} & 0 & 0&0 \\
\pi^{\prime2}& \pi^{\prime1}& 0&0 \\
0& \pi^{\prime2} &\pi^{\prime1}&0\\
\pi^{\prime3}&\pi^{\prime4}&0&0
\end{array}\right) \wedge\left(\begin{array}{c}
\theta^1 \\
\theta^2 \\
\theta^3\\
\theta^4
\end{array}\right)
+\left(\begin{array}{c}
-\theta^2 \wedge \theta^4 \\
T^2_{23}~\theta^2 \wedge \theta^3+T^2_{24}~\theta^2 \wedge \theta^4-\theta^3 \wedge \theta^4\\
\theta^1 \wedge \theta^4+T^3_{23}~\theta^2 \wedge \theta^3+T^3_{34}~\theta^3 \wedge \theta^4\\
T^4_{34}~\theta^3 \wedge \theta^4
\end{array}\right),
\end{array}
\end{equation}
and the essential torsion coefficients are given by
\begin{equation}
\begin{array}{lll}
T^{2}_{23}=\frac{J_3(I_4-a_8)}{a_1},
T^2_{24}=\frac{a_1I_5-a_2}{a_1},
T^3_{23}=\frac{1}{a_1}I_6~ mod~T^2_{23},
\end{array}
\end{equation}
where
\begin{equation}
\begin{array}{lll}
I_4=J_{3_q},
I_5=\frac{1}{3J_3^2}(I_1J_3+3\hat{D}_xJ_3),
I_6=\frac{2}{3}J_{3_q}I_1-\frac{1}{3}I_{1_q}J_3-2J_{3_p}+2J_3I_4I_5.
\end{array}
\end{equation}
We normalize the essential torsion coefficients 
$T^2_{23}=0, T^2_{24}=0$  by setting 
\begin{equation}
\begin{array}{lll}
 a_{2}=a_1I_5,& a_8=I_{4}.
\end{array}
\end{equation}
After these normalizations, it is noted that $I_6$ is a relative invariant and $L_{4;1}, L_{5;1} $ belong to the sub-branch $I_6=0$. 
Consequently, the structure group is reduced to
\begin{equation}
{G}_4=\left\{\left(\begin{array}{cccc}
a_1 & 0 & 0&0 \\
a_1I_5 & \frac{a_1}{J_3}& 0 &0\\
(\frac{I_5^2}{2}+\frac{I_2}{2J_3^2})a_1&(\frac{I_5}{J_3}+\frac{I_1}{3J_3^2})a_1 & \frac{a_1} {J_3^2}&0\\
a_7&I_4&0&{J_3}
\end{array}\right)
 \Bigg \vert a_1\neq 0
\right\},
\end{equation}
which  yields the adapted coframe (\ref{2.6}) with $S\in G_4$. \\

Continuing with a further cycle of reduction and absorption produces the following structure equations
\begin{equation}
\begin{array}{ll}
d\left(\begin{array}{c}
\theta^1 \\
\theta^2 \\
\theta^3\\
\theta^4
\end{array}\right)=\left(\begin{array}{cccc}
\pi^{\prime1} & 0 & 0&0 \\
0& \pi^{\prime1}& 0&0 \\
0& 0 &\pi^{\prime1}&0\\
\pi^{\prime2}&0&0&0
\end{array}\right) \wedge\left(\begin{array}{c}
\theta^1 \\
\theta^2 \\
\theta^3\\
\theta^4
\end{array}\right)\\
+\left(\begin{array}{c}
-\theta^2 \wedge \theta^4 \\
T^2_{13}~\theta^1\wedge \theta^3+T^2_{14}~\theta^1 \wedge \theta^4-\theta^3 \wedge \theta^4\\
T^3_{12}~\theta^1 \wedge \theta^2+T^3_{23}~\theta^2 \wedge \theta^3+T^3_{24}~\theta^2 \wedge \theta^4+\theta^1 \wedge \theta^4\\
T^4_{23}~\theta^2 \wedge \theta^3+T^4_{24}~\theta^2 \wedge \theta^4
\end{array}\right),
\end{array}
\end{equation}
 and the essential torsion coefficientes
\begin{equation}
\begin{array}{lll}
T^{2}_{13}=\frac{I_7-a_7}{a_1},\,\
T^4_{23}=-\frac{J_3^3}{a_1^2}I_8,\,\
\end{array}
\end{equation}
where
\begin{equation}
\begin{array}{ll}
 I_7=-I_{5_q}J_3^2,\,\ I_8=I_{4_q}.
\end{array}
\end{equation}
It is noted that $I_8$ is a relative invariant and $L_{4;1}, L_{5;1} $ belong to the sub-branch $I_8=0$. Normalizing the essential torsion coefficient $ T^2_{13}=0$ by setting
\begin{equation}
a_7=I_7.
\end{equation}
Therefore, the structure group is reduced to
\begin{equation}\label{2.8}
{G}_5=\left\{
\left(\begin{array}{cccc}
a_1 & 0 & 0&0 \\
a_1I_5 & \frac{a_1}{J_3}& 0 &0\\
(\frac{I_5^2}{2}+\frac{I_2}{2J_3^2})a_1&(\frac{I_5}{J_3}+\frac{I_1}{3J_3^2})a_1 & \frac{a_1} {J_3^2}&0\\
I_7&I_4&0&{J_3}
\end{array}\right)
 \Bigg \vert a_1\neq 0
\right\},
\end{equation}
which yields the adapted coframe (\ref{2.6}) with $S\in G_5$. \\

Continuing with a further cycle of reduction and absorption produces the following structure equations
\begin{equation}
\begin{array}{ll}
d\left(\begin{array}{c}
\theta^1 \\
\theta^2 \\
\theta^3\\
\theta^4
\end{array}\right)=\left(\begin{array}{cccc}
\pi^{\prime1} & 0 & 0&0 \\
0& \pi^{\prime1}& 0&0 \\
0& 0 &\pi^{\prime1}&0\\
0&0&0&0
\end{array}\right) \wedge\left(\begin{array}{c}
\theta^1 \\
\theta^2 \\
\theta^3\\
\theta^4
\end{array}\right)
+\left(\begin{array}{c}
-\theta^2 \wedge \theta^4 \\
T^2_{14}~\theta^1\wedge \theta^4-\theta^3 \wedge \theta^4\\
T^3_{12}~\theta^1 \wedge \theta^2+\theta^1 \wedge \theta^4+T^3_{24}~\theta^2 \wedge \theta^4\\
T^4_{12}~\theta^1 \wedge \theta^2+T^4_{14}~\theta^1\wedge \theta^4
\end{array}\right),
\end{array}
\end{equation}
and the essential torsion coefficientes 
\begin{equation}\label{2.33}
\begin{array}{lll}
T^{2}_{14}=-\frac{1}{2J_3^2}I_{9},\,\ T^4_{12}=\frac{1}{a_1^2}I_{10},\,\
T^4_{14}=\frac{1}{a_1J_3}I_{11}, \,\ T^3_{12}=\frac{1}{18a_1J_3^2}I_{12},
\end{array}
\end{equation}
where
\begin{equation}
\begin{array}{lll}
I_{9}=2J_3\hat{D}_xI_5-I_2+J_3^2I_5^2,\,\ 
I_{10}=I_4\hat{D}_xI_7-J_3I_{7_p}+J_3^2\left(\frac{I_4}{J_3}\right)_{u},\,\ 
I_{11}=J_{3_u}-\hat{D}_xI_7,\\
I_{12}=6I_1J_3(I_5I_7-I_4\hat{D}_xI_5)-18J_3^3I_4+6I_{5_p}I_1J_3^2-18I_2J_3I_4I_5-18J_3I_7\hat{D}_xI_5\\
+18J_3^2I_{5_u}+18I_2\hat{D}_xI_4+2I_1^2I_7-9I_{2_p}J_3+6J_3^2\left(\frac{I_1}{J_3}\right)_{u}
+36I_2I_7.
\end{array}
\end{equation}
It is noted that $I_{10}, I_{11}, I_{12}$ are relative invariants and $L_{4;1}, L_{5;1}$ belong to the 
sub-branch $I_{10}= I_{11}= I_{12}=0$.
In this sub-branch, all unabsorbable torsions vanish, and therefore the remaining group parameter $a_1$ cannot be normalized. Moreover, the 1-form $\pi^{\prime 1}$ is uniquely determined, which implies that the equivalence problem is completely resolved. Consequently, this results in the following $e$-structure on the five-dimensional prolonged manifold $M^{(1)} = M \times G_5$.
\begin{equation}\label{2.35}
\left(\begin{array}{l}
\theta^1 \\
\theta^2 \\
\theta^3 \\
\theta^4\\
\pi^{\prime 1}
\end{array}\right)=\left(\begin{array}{ccccc}
a_1 &0 & 0&0&0 \\
a_1I_5 &\frac{a_1}{J_3}  & 0 &0&0\\
(\frac{I_5^2}{2}+\frac{I_2}{2J_3^2})a_1&(\frac{I_5}{J_3}+\frac{I_1}{3J_3^2})a_1  & \frac{a_1} {J_3^2}&0&0\\
I_7&I_4&0&J_3&0\\
-\frac{I_1I_7+3J_3^2I_{5_p}+3J_{3_u}-3J_3I_4\hat{D}_xI_5}{3J_3}&\frac{I_7}{J_3}-I_4I_5&0&-I_5J_3&\frac{1}{a_1}
\end{array}\right)\left(\begin{array}{c}
\omega^1 \\
\omega^2 \\
\omega^3\\
\omega^4\\
da_1
\end{array}\right) .
\end{equation}
This yields the following structure equations
\begin{equation}
\begin{split}
&d\theta^{1}=\pi^{\prime1}\wedge\theta^{1}-\theta^2\wedge\theta^4,\\
&d\theta^{2}=\pi^{\prime1}\wedge\theta^{2}-\frac{K}{2}\theta^{1}\wedge\theta^{4}-\theta^{3}\wedge\theta^{4},\\
&d\theta^{3}=\pi^{\prime1}\wedge\theta^3+\theta^{1}\wedge\theta^{4}-\frac{K}{2}\theta^{2}\wedge\theta^{4},\\
&d\theta^{4}=0,\\
&d\pi^{\prime 1}=0,
\end{split}
\end{equation}
where $K=\frac{I_9}{J_3^2}$. The values of $K$ provides us  with the following two branches.
\subsubsection{ Branch $K_x=K_u=K_p=K_q=0$ ($K$ is constant)}
It should be noted here that the invariant $K=s$ for the canonical form \\$u'''=su'+u.$
As a consequence of Cartan’s equivalence method, we obtain an invariant coframe \eqref{2.35} of rank zero and order zero on the
five-dimensional prolonged space with local coordinates $(x,u,p,q,a_1)$. This coframe defines an $e$-structure whose
structure functions are complete contact invariants of the underlying third-order differential equation.  By applying \cite[Theorem 8.22, page 275]{Olver1995}, every equation belonging to this branch admits a five-dimensional Lie group of contact transformations.\\

By applying \cite[Theorem 8.19, page 271]{Olver1995}, we have proved Theorem \ref{Th3.1}, which will be stated in the next section.
\subsubsection{  Branch $\hat{D}_xK\ne 0$ ($K$ is not a constant)} 
The invariant derivations are given by the vector fields dual to the lifted coframe \eqref{2.35}.
\begin{equation}
\begin{aligned}
\frac{\partial}{\partial\theta^1}&=\frac{J_3I_4I_5-I_7}{J_3a_1}\hat{D}_x+\frac{1}{a_1}\frac{\partial}{\partial u}-\frac{J_3I_5}{a_1}\frac{\partial}{\partial p}+\frac{1}{6a_1J_3}(3J_3^3I_5^2+2J_3^2I_1I_5-3I_2J_3)\frac{\partial}{\partial q}\\
&+\frac{1}{3J_3}(-3J_3I_4\hat{D}_xI_5+3I_{5_p}J_3^2+I_1I_7+3J_{3_u})\frac{\partial}{\partial a_1},\\
\frac{\partial}{\partial\theta^2}&=-\frac{I_4}{a_1}\hat{D}_x+\frac{J_3}{a_1}\frac{\partial}{\partial p}-\frac{1}{3a_1}(3J_3^2I_5+J_3I_1)\frac{\partial}{\partial q}-I_7\frac{\partial}{\partial a_1},\\
\frac{\partial}{\partial\theta^3}&=\frac{J_3^2}{a_1}\frac{\partial}{\partial q},\\
\frac{\partial}{\partial\theta^4}&=\frac{1}{J_3}\hat{D}_x+I_5a_1\frac{\partial}{\partial a_1},\\
\frac{\partial}{\partial\pi^{\prime 1}}&=a_1\frac{\partial}{\partial a_1}.
\end{aligned}
\end{equation}
It is important to observe that the invariants
\begin{equation}
\begin{array}{lll}
\frac{\partial K}{\partial\theta^1}=\frac{\partial K}{\partial\theta^2}=\frac{\partial K}{\partial\theta^3}=\frac{\partial K}{\partial\pi^{\prime 1}}=0, \frac{\partial K}{\partial\theta^4}\ne 0,
\end{array}
\end{equation}
hold for the canonical form $u'''=a^3(x)u$. Thus we choose the sub-branch $\frac{\partial K}{\partial\theta^1}=0, \frac{\partial K}{\partial\theta^2}=0, \frac{\partial K}{\partial\theta^3}=0, \frac{\partial K}{\partial\pi^{\prime 1}}=0$, which is equivalents to the sub-branch
\begin{equation}
\begin{array}{lll}
 K_q=0,\quad (J_3I_4I_5-I_7)\hat{D}_xK+J_3K_u-J_3^2I_5K_p=0, \\
I_4\hat{D}_xK-J_3K_p=0.
\end{array}
\end{equation}
Moreover, the first-order classifying set is
\begin{equation}
C^{(1)}=\left\{K, \frac{\partial K}{\partial\theta^4}\right\}.
\end{equation}
The invariants $K$ and $\frac{\partial K}{\partial\theta^4}$ are functionally dependent. As a consequence of Cartan’s equivalence method, we obtain an invariant coframe \eqref{2.35} of rank one and order zero on the
five-dimensional prolonged space with local coordinates $(x,u,p,q,a_1)$. This coframe defines an $e$-structure whose
structure functions are complete contact invariants of the underlying third-order ODE.  By applying \cite[Theorem 8.22, page 275]{Olver1995}, every equation belonging to this branch admits a four-dimensional Lie group of contact transformations. 
By applying \cite[Theorem 8.19, page 271]{Olver1995}, the overlap of the classifying sets $C^{(1)}(\theta)$ and $C^{(1)}(\bar{\theta})$ ensures that the two
$e$-structures are locally equivalent, thereby yielding the necessary and sufficient invariant conditions for contact
equivalence. We conclude that
\begin{equation}\label{2.46}
\begin{array}{lll}
K=\bar{K}=\frac{2\bar{a}(\bar{x})\bar{a}''(\bar{x})-3\bar{a}'(\bar{x})^2}{\bar{a}(\bar{x})^4},\,
\frac{1}{J_3}\hat{D}_xK=-\frac{1}{\bar{a}(\bar{x})}\hat{D}_{\bar{x}}\bar{K}.
\end{array}
\end{equation}
 So, we have proved Theorem \ref{Th3.2}, which will be stated in the next section.
\section{Main Theorems}
\setcounter{equation}{0}
In this section, we present the main theorems established in the previous section.
\begin{theorem} \label{Th3.1}
A scalar third-order ODE  $u''' = f(x,u,u',u'')$ is equivalent to the canonical form
\begin{equation}
\bar{u}'''=s\bar{u}'+\bar{u}, s=constant ,
\end{equation}
with five  symmetries via contact transformations (\ref{2.4}) if and only if  the relative invariants
\begin{equation}\label{3.1}
\begin{array}{lll}
I_6=\frac{2}{3}J_{3_q}I_1-\frac{1}{3}I_{1_q}J_3-2J_{3_p}+2J_3I_4I_5,\\
I_8=I_{4_q},\\
I_{10}=I_4\hat{D}_xI_7-J_3I_{7_p}+J_3^2\left(\frac{I_4}{J_3}\right)_{u},\\
I_{11}=J_{3_u}-\hat{D}_xI_7,\\
I_{12}=6I_1J_3(I_5I_7-I_4\hat{D}_xI_5)-18J_3^3I_4+6I_{5_p}I_1J_3^2-18I_2J_3I_4I_5-18J_3I_7\hat{D}_xI_5\\
+18J_3^2I_{5_u}+18I_2\hat{D}_xI_4+2I_1^2I_7-9I_{2_p}J_3+6J_3^2\left(\frac{I_1}{J_3}\right)_{u}
+36I_2I_7,\\
K_x, K_u, K_p, K_q,
\end{array}
\end{equation}
vanish identically, where
\begin{equation}\label{3.2}
\begin{array}{lll}
I_1=-f_q,&
I_2=-\frac{2}{9}I_1^2-f_p-\frac{1}{3}\hat{D}_xI_1,\\
I_3=J_3^3=-\frac{1}{3}I_1I_2-f_u-\frac{1}{2}\hat{D}_xI_2\ne 0,&
I_4=J_{3_q},\\
I_5=\frac{1}{3J_3^2}(I_1J_3+3\hat{D}_xJ_3),&
I_7=-I_{5_q}J_3^2,\\
I_{9}=2J_3\hat{D}_xI_5-I_2+J_3^2I_5^2,&
K=\frac{I_{9}}{J_3^2}.
\end{array}
\end{equation}
Finally, the constant $s$ appearing in the resulting canonical form is determined by the relation $s = K$.
\end{theorem}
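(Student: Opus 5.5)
The plan is to read Theorem \ref{Th3.1} off the Cartan reduction of Section 2 and then apply Olver's equivalence theorem for $e$-structures. The argument has two directions.

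For necessity, I will first show that every quantity in (\ref{3.1}) is a relative contact invariant. Each of $I_6$, $I_8$, $I_{10}$, $I_{11}$, $I_{12}$ appeared during the reduction as the invariant factor of an essential torsion coefficient. These coefficients sit in the structure equations of a coframe that is adapted to a subgroup $G_k$. Multiplying by powers of the remaining group parameters, as in $T^4_{23}=-\frac{J_3^3}{a_1^2}I_8$, cannot change whether they vanish. So the vanishing of each of them is preserved under any contact transformation (\ref{2.4}).

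Next I will verify directly that these invariants vanish for the canonical form $\bar u'''=s\bar u'+\bar u$. There $f=sp+u$, which gives $I_1=0$, $I_2=-s$, and $I_3=-f_u=-1$, so $J_3=-1$. It also gives $I_4=I_5=I_7=0$ and $I_9=s$. Hence $I_6=I_8=I_{10}=I_{11}=I_{12}=0$ and $K=s$. The condition $I_3\neq0$ holds as well. Since $K$ is a genuine absolute invariant (a structure function of the $e$-structure (\ref{2.35})), an equivalent equation must have $K\equiv s$. In particular $K_x=K_u=K_p=K_q=0$.

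For sufficiency, assume $I_3\neq0$ and that all the listed invariants vanish. Then the normalizations $a_3=a_1/J_3$, $a_2=a_1I_5$, $a_8=I_4$, $a_7=I_7$ are legitimate. The given equation therefore lands in the branch that produces the $e$-structure (\ref{2.35}) on $M\times G_5$. Its structure equations are those displayed after (\ref{2.35}), with the single structure function $K$, and $K$ is constant by hypothesis.

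The canonical equation yields an $e$-structure with the same structure equations and the same constant $K=s$. Both coframes are therefore of rank zero with identical constant structure functions. By \cite[Theorem 8.19]{Olver1995}, two rank-zero coframes with identical constant structure functions are locally equivalent. The equivalence map $\Phi$ respects $\theta^1,\dots,\theta^4$ modulo the group $G_5$. Consequently it projects to a diffeomorphism of $J^2$ satisfying the fundamental equivalence conditions (\ref{2.5}). Because $\Phi^*\bar\omega^1$ is a multiple of $\omega^1$ and $\Phi^*\bar\omega^2$ lies in the span of $\omega^1,\omega^2$, the projected map is the prolongation of a contact transformation (\ref{2.4}) carrying one equation to the other. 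This gives the ``if'' direction with $s=K$. The dimension count (a five-dimensional group of symmetries) then follows from \cite[Theorem 8.22]{Olver1995}.

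The main obstacle is making sure that the listed vanishing conditions really suffice for the last absorption step. Concretely, one must check that once $I_6=I_8=I_{10}=I_{11}=I_{12}=0$, every other torsion coefficient in the final structure equations can be expressed through $K$. The relevant coefficients are $T^2_{14}=-\frac{1}{2}K$, $T^3_{24}$ and the remaining $T^3_{12}$-type terms, and they must take exactly the form shown. Otherwise a hidden invariant would survive. I would settle this by a symbolic computation of $d\theta^i$ from (\ref{2.35}), reducing with the vanishing relations and their total derivatives, in the same way the paper's reduction was carried out.
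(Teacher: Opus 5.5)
Your proposal is correct and follows essentially the same route as the paper. Both use the Cartan reduction of Section~2, in which $I_6, I_8, I_{10}, I_{11}, I_{12}$ appear as relative-invariant factors of essential torsion, leading to the rank-zero $e$-structure \eqref{2.35} on $M\times G_5$ whose only structure function is $K$, and then invoke \cite[Theorems 8.19, 8.22]{Olver1995}. You read off $s=K$ from $f=sp+u$, where indeed $I_1=0$, $I_2=-s$, $J_3=-1$ and $I_4=I_5=I_7=0$; your explicit check of the canonical form and your remark that the projected map is a prolonged contact transformation only make explicit what the paper leaves implicit.
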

\begin{theorem} \label{Th3.2}
A scalar third-order ODE  $u''' = f(x,u,u',u'')$  is equivalent to  the canonical form
\begin{equation}
 \bar{u}'''=\bar{a}^3(x) \bar{u},  \left(\frac{2\bar{a}\bar{a}''-3\bar{a}'^2}{\bar{a}^4}\right)_x\neq 0,
\end{equation}
with four  symmetries via  contact transformations (\ref{2.4}) if and only if  the relative invariants
\begin{equation}\label{3.1}
\begin{array}{lll}
I_6=\frac{2}{3}J_{3_q}I_1-\frac{1}{3}I_{1_q}J_3-2J_{3_p}+2J_3I_4I_5,\\
I_8=I_{4_q},\\
I_{10}=I_4\hat{D}_xI_7-J_3I_{7_p}+J_3^2\left(\frac{I_4}{J_3}\right)_{u},\\
I_{11}=J_{3_u}-\hat{D}_xI_7,\\
I_{12}=6I_1J_3(I_5I_7-I_4\hat{D}_xI_5)-18J_3^3I_4+6I_{5_p}I_1J_3^2
-18I_2J_3I_4I_5
-18J_3I_7\hat{D}_xI_5\\
+18J_3^2I_{5_u}+18I_2\hat{D}_xI_4+2I_1^2I_7-9I_{2_p}J_3+6J_3^2\left(\frac{I_1}{J_3}\right)_{u}
+36I_2I_7,\\
I_{13}=(J_3I_4I_5-I_7)\hat{D}_xK+J_3K_u-J_3^2I_5K_p,\\
I_{14}=K_q,\\
I_{15}=I_4\hat{D}_xK-J_3K_p,
\end{array}
\end{equation}
vanish identically, where
\begin{equation}\label{3.2}
\begin{array}{lll}
I_1=-f_q,&
I_2=-\frac{2}{9}I_1^2-f_p-\frac{1}{3}\hat{D}_xI_1,\\
I_3=J_3^3=-\frac{1}{3}I_1I_2-f_u-\frac{1}{2}\hat{D}_xI_2\ne 0,&
I_4=J_{3_q},\\
I_5=\frac{1}{3J_3^2}(I_1J_3+3\hat{D}_xJ_3),&
I_7=-I_{5_q}J_3^2,\\
I_{9}=2J_3\hat{D}_xI_5-I_2+J_3^2I_5^2,&
K=\frac{I_{9}}{J_3^2},\\
D_xK\ne0.
\end{array}
\end{equation}
\end{theorem}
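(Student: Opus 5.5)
The argument runs through the Cartan reduction of Section 2, treating the two directions of Theorem \ref{Th3.2} separately.

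\emph{Necessity.} I would compute the invariants directly for the canonical form $\bar u'''=\bar a^3(\bar x)\bar u$. Here $\bar f_q=\bar f_p=0$, so $\bar I_1=\bar I_2=0$ and $\bar I_3=\bar a^3$, giving $\bar J_3=\bar a(\bar x)$. Since $\bar J_3$ depends only on $\bar x$, we get $\bar I_4=0$, then $\bar I_5=\bar a'/\bar a^2$ depends on $\bar x$ alone, and hence $\bar I_7=0$.

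Substituting these values, every term of $\bar I_6,\bar I_8,\bar I_{10},\bar I_{11}$ vanishes, and each term of $\bar I_{12}$ contains one of $\bar I_1,\bar I_2,\bar I_4,\bar I_7,\bar I_{5_u},\bar I_{5_p}$, all of which are zero. Next, $\bar K=\bar I_9/\bar a^2$ is a function of $\bar x$ only and equals $(2\bar a\bar a''-3\bar a'^2)/\bar a^4$ after simplification. Consequently $\bar I_{13}=\bar J_3\bar K_u-\dots=0$, $\bar I_{14}=0$ and $\bar I_{15}=0$, while $\hat D_{\bar x}\bar K\neq0$ by hypothesis.

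These quantities are relative invariants, so their vanishing is preserved under the equivalence \eqref{2.5}. More precisely, each is the coefficient of an essential torsion in a structure equation, multiplied by a nonzero power of the group parameters, or it is a coframe derivative of the invariant $K$. Pulling back by a contact transformation therefore shows that their vanishing for $\bar f$ forces their vanishing for $f$. The conditions $I_3\neq0$ and $D_xK\neq0$ transfer in the same way, because $K$ is an absolute invariant.

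\emph{Sufficiency.} If all the listed relative invariants vanish, then the normalizations of Section 2 ($a_6,a_9,a_5,a_4,a_3,a_2,a_8,a_7$) are legitimate. These produce the $e$-structure \eqref{2.35} on the five-dimensional space, whose structure equations involve only $K$. The vanishing of $I_{13},I_{14},I_{15}$ is exactly the statement that $\partial K/\partial\theta^i=0$ for $i=1,2,3$ and for $\pi'^1$. The structure functions of the $e$-structure are therefore $K$ and its derivatives along $\partial/\partial\theta^4$ alone, so the rank is one and the classifying manifold is a curve. The target $\bar u'''=\bar a^3\bar u$ produces an $e$-structure of the same form.

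To apply \cite[Theorem 8.19]{Olver1995}, I must show that the classifying curves $\{(K,\partial K/\partial\theta^4)\}$ of the two problems overlap. The key freedom is the choice of $\bar a$. Given $f$, I would treat the relations in \eqref{2.46} as an ODE for $\bar a(\bar x)$. Write $K=\kappa(t)$ along an invariant parameter $t$ with $dt=J_3\,dx$, which is legitimate since $\theta^4$ is closed. Then choose $\bar a$ so that $(2\bar a\bar a''-3\bar a'^2)/\bar a^4$ as a function of $\int\bar a\,d\bar x$ equals $\kappa$, up to the sign convention appearing in \eqref{2.46}. Substituting $\bar a=1/\sigma'$ or a Schwarzian-type substitution reduces this to a second-order ODE, which always has local solutions. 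Once the curves coincide, the cited theorem yields a local diffeomorphism mapping one coframe to the other. Because the first four coframe elements encode the contact system, this map descends to a contact transformation of the form \eqref{2.4}. The four-dimensional symmetry count follows from \cite[Theorem 8.22]{Olver1995}.

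\emph{Main obstacle.} I expect two points to require the most care. The first is the careful verification that $K$ for the canonical form has exactly the stated expression with the stated sign. The second is the existence of $\bar a$ realizing an arbitrary invariant curve, which comes down to solving the Schwarzian-type ODE described above.
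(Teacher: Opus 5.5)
Your proposal is correct and follows the paper's route: Cartan reduction to the $e$-structure \eqref{2.35}, then the sub-branch $I_{13}=I_{14}=I_{15}=0$ giving a rank-one coframe with classifying set $\{K,\partial K/\partial\theta^4\}$, then Olver's Theorems 8.19 and 8.22. Your ODE for $\bar a$ along the invariant parameter $t$ carries the same content as the paper's Riccati equation $-\frac{2}{J_3}\hat{D}_xH+H^2=K$ from Section 4; both linearize to $4w''=\kappa(t)\,w$, with $\bar a=w^2$ as a function of $t$.

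One correction is needed. For $\bar f=\bar a^3\bar u$ one has $\bar I_3=-\bar a^3$, so $\bar J_3=-\bar a$, $\bar I_5=-\bar a'/\bar a^2$, and the invariant parameter is $\bar t=-\int\bar a\,d\bar x$. This changes none of the vanishing statements and leaves $\bar K$ unchanged, but it is exactly what fixes the minus sign in \eqref{2.46}. With $+\int\bar a\,d\bar x$ you would be matching the curve $(\kappa,\kappa')$ against $(\kappa,-\kappa')$, and these generally do not overlap.
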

\section{Construction of contact transformations based on invariant coframes}
\setcounter{equation}{0}
We construct contact transformations between equivalent third-order ODEs in Table 1 using invariant coframes and the following proposition.
\begin{proposition} 
Assume that the third-order ODEs
\begin{equation}\label{e4.8}
\begin{array}{lll}
u^{\prime \prime\prime}=f\left(x, u, u^{\prime},u^{\prime \prime}\right), \quad \bar{u}^{\prime \prime\prime}=s\bar{u}'+\bar{u},
\end{array}
\end{equation}
are  equivalent  under the contact transformation
\begin{equation}\label{e4.9}
\bar x = \varphi (x,u,p),\,\,\bar u = \psi (x,u,p),\,\, \bar{p}=\chi(x,u,p),
\end{equation} 
with non-zero Jacobian. Then the linearizing contact transformation (\ref{e4.9}) between  the third-order ODEs (\ref{e4.8}) can be obtained by the following systematic way :
\begin{description} 
\item[Step 1] Find non-zero solution $a_1$ for the \textbf{first order linear system of PDEs}
\begin{equation}\label{4.3}
\begin{array}{l}
D_xa_1 =J_3I_5a_1,~a_{1_u}=-Qa_1, ~a_{1_p}=(I_4I_5-\frac{I_7}{J_3})a_1,~a_{1_q}=0.\\
\end{array}
\end{equation}
\item[Step 2] Find non-zero solution $\varphi$ for the \textbf{first order linear system of PDEs}
\begin{equation}\label{4.4}
\begin{array}{l}
\hat{D}_x \varphi=-J_3 ,\,\,\varphi _u=-I_7 ,\,\,\varphi_p=-I_4.\\
\end{array}
\end{equation}
\item[Step 3] Find non-zero solutions $\eta, \chi, \psi $ for the \textbf{first order linear system of PDEs}
\begin{equation}\label{4.5}
\begin{array}{l}
\hat{D}_x\eta=-J_3\bar{f} ,\,\, \eta_u=(\frac{s+I_5^2}{2}+\frac{I_2}{2J_3^2})a_1-I_7\bar{f} , \,\,\eta_p=(\frac{I_5}{J_3}+\frac{I_1}{3J_3^2})a_1-I_4\bar{f},\,\,\eta_q=\frac{a_1}{J_3^2}.\\
\hat{D}_x\chi=-J_3~\eta, \,\, \chi_u=-I_5a_1-I_7~\eta,\,\,\chi_p=-\frac{a_1}{J_3}-I_4~\eta.\\
{I_7}\hat{D}_x\psi=J_3(\psi_u-a_1),\,\,\psi_x=\chi\varphi_x-pa_1,\,\,\psi_p=-I_4~\chi.
\end{array}
\end{equation}
\end{description} 
where $a_1(x,u,p)$ is auxiliary function, $\chi=\frac{\hat{D}_x\psi}{\hat{D}_x\varphi },\,\, \eta=\frac{\hat{D}_x\chi}{\hat{D}_x\varphi },\,\,\bar{f}=s{\chi}+{\psi},$ and
\begin{equation*}
\begin{array}{lll}
I_1=-f_q,&
I_2=-\frac{2}{9}I_1^2-f_p-\frac{1}{3}\hat{D}_xI_1,\\
I_3=J_3^3=-\frac{1}{3}I_1I_2-f_u-\frac{1}{2}\hat{D}_xI_2,&
I_5=\frac{1}{3J_3^2}(I_1J_3+3\hat{D}_xJ_3),\\
 I_7=-I_{5_q}J_3^2,&
I_{9}=2J_3\hat{D}_xI_5-I_2+J_3^2I_5^2,\\
Q=-\frac{I_1I_7+3J_3^2I_{5_p}+3J_{3_u}-3J_3I_4\hat{D}_xI_5}{3J_3},&
s=K=\frac{I_9}{J_3^2}.
\end{array}
\end{equation*}
\proof
The equivalence of the third-order  ODEs (\ref{e4.8})  under the contact transformation (\ref{e4.9}) can be checked using the  invariant five-dimensional coframe (\ref{2.35}) on the space $M^{(1)}=M\times G_5$ such that
\begin{equation}\label{ee4.10}
\begin{aligned}
&\Phi^*\left(\begin{array}{ccccc}
\bar{a}_1 &0 & 0&0&0 \\
\bar{a}_1\bar{I}_5 &\frac{\bar{a}_1}{\bar{J}_3}  & 0 &0&0\\
(\frac{\bar{I}_5^2}{2}+\frac{\bar{I}_2}{2\bar{J}_3^2})\bar{a}_1&(\frac{\bar{I}_5}{\bar{J}_3}+\frac{\bar{I}_1}{3\bar{J}_3^2})\bar{a}_1  & \frac{\bar{a}_1} {\bar{J}_3^2}&0&0\\
\bar{I}_7&\bar{I}_4&0&\bar{J}_3&0\\
-\frac{\bar{I}_1\bar{I}_7+3\bar{J}_3^2\bar{I}_{5_{\bar{p}}}+3\bar{J}_{3_{\bar{u}}}-3\bar{J}_3I_4\hat{D}_{\bar{x}}\bar{I}_5}{3\bar{J}_3}&\frac{\bar{I}_7}{\bar{J}_3}-\bar{I}_4\bar{I}_5&0&-\bar{I}_5\bar{J}_3&\frac{1}{\bar{a}_1}
\end{array}\right) \left(\begin{array}{c}
 \bar{\omega}^1\\\bar{\omega}^2\\\bar{\omega}^3\\\bar{\omega}^4\\
d\bar{a}_1\\
\end{array}\right)=\\
&\left(\begin{array}{ccccc}
a_1 &0 & 0&0&0 \\
a_1I_5 &\frac{a_1}{J_3}  & 0 &0&0\\
(\frac{I_5^2}{2}+\frac{I_2}{2J_3^2})a_1&(\frac{I_5}{J_3}+\frac{I_1}{3J_3^2})a_1  & \frac{a_1} {J_3^2}&0&0\\
I_7&I_4&0&J_3&0\\
-\frac{I_1I_7+3J_3^2I_{5_p}+3J_{3_u}-3J_3I_4\hat{D}_xI_5}{3J_3}&\frac{I_7}{J_3}-I_4I_5&0&-I_5J_3&\frac{1}{a_1}
\end{array}\right)\left(\begin{array}{c}
\omega^1\\\omega^2\\\omega^3\\\omega^4 \\
da_1\\
\end{array}\right),
\end{aligned}
\end{equation}
where $\Phi^*$  is the pullback arising from the first prolongation of the contact transformation (\ref{e4.9}), and the constant $s$ can be evaluted by $s=\frac{I_9}{J_3^2}$ as we stated in Theorem  \ref{Th3.1}. By incorporating the values  $$\bar{I_1}=0,~\bar{I_2}=-s,~\bar{J_3}=-1,~\bar{I_4}=0,~\bar{I_5}=0,~\bar{I_7}=0, ~\bar{Q}=0$$ for $\bar{u}^{\prime \prime\prime}=s\bar{u}'+\bar{u}$, and $\bar{a}_1=1$,
 equation (\ref{ee4.10}) can be expressed  as
\begin{equation}\label{ee4.15}
\left( {\begin{array}{*{20}c}
\bar{\omega}^1\\\bar{\omega}^2\\\bar{\omega}^3\\\bar{\omega}^4\\ 0\\
\end{array}} \right)  =\left( {\begin{array}{*{20}c}
  a_1 & 0 & 0 &0&0 \\
  -I_5a_1 & -\frac{a_1}{J_3} & 0&0&0  \\
  (\frac{s+I_5^2}{2}+\frac{I_2}{2J_3^2})a_1 &(\frac{I_5}{J_3}+\frac{I_1}{3J_3^2})a_1& \frac{a_1}{J_3^2}&0&0  \\
{-I_7} &{-I_4}&0&{-J_3}&0\\
Q&\frac{I_7}{J_3}-I_4I_5&0&-J_3I_5&\frac{1}{a_1}
\end{array}} \right) \left( {\begin{array}{*{20}c} 
\omega^1\\\omega^2\\\omega^3\\\omega^4\\
da_1\\
\end{array}} \right).
\end{equation}
Computing the pullback associated with the left-hand side of equation (\ref{ee4.15}) gives systems (\ref{4.3}),  (\ref{4.4}), and (\ref{4.5}).\\
\endproof
\end{proposition} 
\begin{proposition} 
Assume that the third-order ODEs
\begin{equation}
\begin{array}{lll}\label{ee4.8}
u^{\prime \prime\prime}=f\left(x, u, u^{\prime},u^{\prime \prime}\right) ,\quad  \bar{u}^{\prime \prime\prime}=\bar{a}^3(\bar{x}) \bar{u}
\end{array}
\end{equation}
are equivalent  under the contact transformation
\begin{equation}\label{ee4.9}
\bar x = \varphi (x,u,p),\,\,\bar u = \psi (x,u,p),\,\, \bar{p}=\chi(x,u,p),
\end{equation} 
with non-zero Jacobian. 
Then the linearizing contact transformation (\ref{ee4.9}) between  the third-order ODEs (\ref{ee4.8}) can be obtained by the following systematic way :
\begin{description} 
\item[Step 1] Find the auxiliary function $H$ satisfying the \textbf{first order system of PDEs}
\begin{equation}\label{4.10}
\begin{array}{l}
-\frac{2}{J_3}\hat{D}_xH+H^2=K. 
\end{array}
\end{equation}
\item[Step 2] Find the auxiliary function $b$ satisfying the \textbf{first order linear system of PDEs}
\begin{equation}\label{4.11}
\begin{array}{l}
\hat{D}_xb=-(J_3H)b.
\end{array}
\end{equation}
\item[Step 3] Find non-zero solution $a_1$ satisfying the \textbf{first order linear system of PDEs}
\begin{equation}\label{4.12}
\begin{array}{l}
D_xa_1 =J_3(H+I_5)a_1, a_{1_u}=(HI_7-Q)a_1, a_{1_p}=(I_4(H+I_5)-\frac{I_7}{J_3})a_1, a_{1_q}=0.\\
\end{array}
\end{equation}
\item[Step 4] Find non-zero solution $\varphi$ satisfying the \textbf{first order linear system of PDEs}
\begin{equation}\label{4.13}
\begin{array}{l}
\hat{D}_x \varphi=-\frac{J_3}{b} ,\,\,\varphi _u=-\frac{I_7}{b} ,\,\,\varphi_p=-\frac{I_4}{b}.\\
\end{array}
\end{equation}
\item[Step 5] Find non-zero solutions $\eta, \chi, \psi $ satisfying the \textbf{first order linear system of PDEs}
\begin{equation}\label{4.14}
\begin{array}{l}
\hat{D}_x\eta=-\frac{J_3}{b}\bar{f} ,\,\, \eta_u=(\frac{(H+I_5)^2}{2}+\frac{I_2}{2J_3^2})b^2a_1-\frac{I_7}{b}\bar{f} , \,\,\eta_p=(\frac{H+I_5}{J_3}+\frac{I_1}{3J_3^2})b^2a_1-\frac{I_4}{b}\bar{f},\,\,\eta_q=\frac{b^2a_1}{J_3^2}.\\
\hat{D}_x\chi=-\frac{J_3}{b}~\eta, \,\, \chi_u=-(H+I_5)ba_1-\frac{I_7}{b}~\eta,\,\,\chi_p=-\frac{ba_1}{J_3}-\frac{I_4}{b}~\eta.\\
{I_7}\hat{D}_x\psi=J_3(\psi_u-a_1),\,\,\psi_x=\chi\varphi_x-pa_1,\,\,\psi_p=-\frac{I_4}{b}~\chi.
\end{array}
\end{equation}
\end{description} 
where $a_1(x,u,p), H(x,u,p), b(x,u,p)$ are auxiliary functions, $\chi=\frac{\hat{D}_x\psi}{\hat{D}_x\varphi },\,\,\\
 \eta=\frac{\hat{D}_x\chi}{\hat{D}_x\varphi },\,\,\bar{f}=\bar{a}^3(\phi){\psi}$, $ H(x,u,p)=\frac{\hat{D}_{\bar{x}}\bar{a}(\bar{x})}{\bar{a}^2(\bar{x})}$, $b(x,u,p)=\bar{a}(\bar{x})$  and
\begin{equation*}
\begin{array}{lll}
I_1=-f_q,&
I_2=-\frac{2}{9}I_1^2-f_p-\frac{1}{3}\hat{D}_xI_1,\\
I_3=J_3^3=-\frac{1}{3}I_1I_2-f_u-\frac{1}{2}\hat{D}_xI_2,&
I_5=\frac{1}{3J_3^2}(I_1J_3+3\hat{D}_xJ_3),\\
 I_7=-I_{5_q}J_3^2,&
I_{9}=2J_3\hat{D}_xI_5-I_2+J_3^2I_5^2,\\
Q=-\frac{I_1I_7+3J_3^2I_{5_p}+3J_{3_u}-3J_3I_4\hat{D}_xI_5}{3J_3},&
K=\frac{I_9}{J_3^2}.
\end{array}
\end{equation*}
\proof
The equivalence of the third-order  ODEs (\ref{ee4.8})  under the contact transformation (\ref{ee4.9}) can be checked  using the  invariant five-dimensional  coframe  (\ref{2.35}) on the space $M^{(1)}=M\times G_5$ such that
\begin{equation}\label{eee4.10}
\begin{aligned}
&\Phi^*\left(\begin{array}{ccccc}
\bar{a}_1 &0 & 0&0&0 \\
\bar{a}_1\bar{I}_5 &\frac{\bar{a}_1}{\bar{J}_3}  & 0 &0&0\\
(\frac{\bar{I}_5^2}{2}+\frac{\bar{I}_2}{2\bar{J}_3^2})\bar{a}_1&(\frac{\bar{I}_5}{\bar{J}_3}+\frac{\bar{I}_1}{3\bar{J}_3^2})\bar{a}_1  & \frac{\bar{a}_1} {\bar{J}_3^2}&0&0\\
\bar{I}_7&\bar{I}_4&0&\bar{J}_3&0\\
-\frac{\bar{I}_1\bar{I}_7+3\bar{J}_3^2\bar{I}_{5_{\bar{p}}}+3\bar{J}_{3_{\bar{u}}}-3\bar{J}_3I_4\hat{D}_{\bar{x}}\bar{I}_5}{3\bar{J}_3}&\frac{\bar{I}_7}{\bar{J}_3}-\bar{I}_4\bar{I}_5&0&-\bar{I}_5\bar{J}_3&\frac{1}{\bar{a}_1}
\end{array}\right) \left(\begin{array}{c}
 \bar{\omega}^1\\\bar{\omega}^2\\\bar{\omega}^3\\\bar{\omega}^4 \\
d\bar{a}_1\\
\end{array}\right)=\\
&\left(\begin{array}{ccccc}
a_1 &0 & 0&0&0 \\
a_1I_5 &\frac{a_1}{J_3}  & 0 &0&0\\
(\frac{I_5^2}{2}+\frac{I_2}{2J_3^2})a_1&(\frac{I_5}{J_3}+\frac{I_1}{3J_3^2})a_1  & \frac{a_1} {J_3^2}&0&0\\
I_7&I_4&0&J_3&0\\
-\frac{I_1I_7+3J_3^2I_{5_p}+3J_{3_u}-3J_3I_4\hat{D}_xI_5}{3J_3}&\frac{I_7}{J_3}-I_4I_5&0&-I_5J_3&\frac{1}{a_1}
\end{array}\right)\left(\begin{array}{c}
 \omega^1\\\omega^2\\\omega^3\\\omega^4\\
da_1\\
\end{array}\right),
\end{aligned}
\end{equation}
where $\Phi^*$ is the pullback arising from the first prolongation of the contact transformation  (\ref{ee4.9}).\\

 The function $\bar{a}(\bar{x})$ can be evaluated as follows:
inserting the contact transformation (\ref{ee4.9}) into the invariant relation $\Phi^*\bar{\theta^4}=\theta^4$ and using $~\bar{J_3}=-\bar{a}(\bar{x})$ for $\bar{u}^{\prime \prime\prime}=\bar{a}^3(\bar{x}) \bar{u}$
 results in $D_x\varphi=-\frac{J_3}{\bar{a}(\bar{x})}$.
Thus, the first equation in the system (\ref{2.46}) can be rewritten as 
\begin{equation}
\begin{array}{lll}
K=-\frac{2}{J_3}\hat{D}_xH+H^2,
\end{array}
\end{equation}
 where  $ H(x,u,p)=\frac{\hat{D}_{\bar{x}}\bar{a}(\bar{x})}{\bar{a}^2(\bar{x})}$. Since $D_x\varphi=-\frac{J_3}{\bar{a}(\bar{x})}$ and $\hat{D}_x=(\hat{D}_x\varphi)\hat{D}_{\bar{x}}$, then 
\begin{equation}
\begin{array}{lll}
 \hat{D}_xb=-(J_3H)b,
\end{array}
\end{equation}
where $b(x,u,p)=\bar{a}(\bar{x})$.\\
Finally, incorporating  the values 
$$\bar{I_1}=0,~\bar{I_2}=0,~\bar{J_3}=-\bar{a}(\bar{x}) ,~\bar{I_4}=0,~\bar{I_5}=-\frac{\bar{a}'(\bar{x}) }{\bar{a}^2(\bar{x}) },~\bar{I_7}=0,~\bar{Q}=0$$
for $\bar{u}^{\prime \prime\prime}=\bar{a}^3(\bar{x}) \bar{u}$ and $\bar{a_1}=1$, in equation (\ref{eee4.10}) yields
\begin{equation}\label{eeee4.13}
\begin{array}{lll}
\left( {\begin{array}{*{20}c}
\bar{\omega}^1\\\bar{\omega}^2\\\bar{\omega}^3\\\bar{\omega}^4\\ 0\\
\end{array}} \right) 
&=\left( {\begin{array}{*{20}c}
   {a_1 } & 0 & 0 &0&0 \\
  - (H+I_5)ba_1&-\frac{ba_1}{J_3}& 0&0&0  \\
  (\frac{(H+I_5)^2}{2}+\frac{I_2}{2J_3^2})b^2a_1 &(\frac{H+I_5}{J_3}+\frac{I_1}{3J_3^2})b^2a_1& \frac{b^2a_1}{J_3^2}&0&0  \\
-\frac{I_7}{b}&-\frac{I_4}{b}&0&-\frac{J_3}{b}&0\\
Q-HI_7&\frac{I_7}{J_3}-I_4(H+I_5)&0&-J_3(H+I_5)&\frac{1}{a_1}
\end{array}} \right)
 \left( {\begin{array}{*{20}c} 
\omega^1\\\omega^2\\\omega^3\\\omega^4\\
da_1\\
\end{array}} \right).
\end{array}
\end{equation}
Computing the pullback associated with the left-hand side of  equation (\ref{eeee4.13}) gives systems (\ref{4.12}), (\ref{4.13}), and(\ref{4.14}).\\
\endproof
\end{proposition} 

\begin{example}
Let us consider the canonical form
\begin{equation}\label{nn2}
	u'''=\frac{\alpha u''^2}{u'}.
\end{equation}
It can be readily verified that the function $f(x,u,p,q)=\frac{\alpha q^2}{p}$ fulfills the condition stated in Theorem \ref{Th3.1}. This establishes its equivalence to the canonical form $\bar{u}'''=s\bar{u}'+\bar{u}$,
 with five symmetries under contact transformation, where $s=\frac{3\alpha^2-9\alpha+9}{(2\alpha^3-9\alpha^2+9\alpha)^{2/3}}.$\\
 
Using $I_1=-\frac{2\alpha q}{p},~I_2=-\frac{\alpha q^2 (2\alpha-3)}{9p^2},~J_3=\frac{(2\alpha^3-9\alpha^2+9\alpha)^{\frac{1}{3}}}{3p}q,~I_4=\frac{(2\alpha^3-9\alpha^2+9\alpha)^{\frac{1}{3}}}{3p}$,
~$I_5=\frac{(\alpha-3)}{(2\alpha^3-9\alpha^2+9\alpha)^{\frac{1}{3}}},~I_7=0,~Q=0$,  evaluated for equation (\ref{nn2}), we obtain the transformation through the following steps:
\begin{description}
\item[Step 1] The auxiliary funcion $a_1=p^{\frac{\alpha}{3}-1},$ is a solution for the system (\ref{4.3}).
\item[Step 2] The solution to system (\ref{4.4}) is 
$\varphi =-\frac{1}{3}(2\alpha^3-9\alpha^2+9\alpha)^{1/3}\ln{p}$.
\item[Step 3]The solution to system (\ref{4.5}) is
$\chi=\frac{\alpha p x-\alpha u+3u}{(2\alpha^3-9\alpha^2+9\alpha)^{\frac{1}{3}}}p^{\frac{\alpha}{3}-1}, \psi=(u-xp)p^{\frac{\alpha}{3}-1}$.
\end{description}
Hence, the proper contact transformation
\begin{equation}
\begin{array}{lll}
\varphi \left( x,u,p \right)=-\frac{1}{3}(2\alpha^3-9\alpha^2+9\alpha)^{1/3}\ln{p},~\psi \left( x,u,p \right)=(u-xp)p^{\frac{\alpha}{3}-1},
\end{array}
\end{equation}
transforms the canonical form $\bar{u}'''=s\bar{u}'+\bar{u}$ to the canonical form (\ref{nn2}).
\end{example}
\begin{remark}
The function $f(x,u,p,q)=\frac{\alpha q^2}{p}$  does not satisfy the conditions of  Theorem 2.1 in \cite{Dweik2019}. So the canoical form $u'''=\frac{\alpha u''^2}{u'}$ is not equivalent to the canonical form $\bar{u}'''=s\bar{u}'+\bar{u}$,
 under point transformation.
\end{remark}

\begin{example}
Let us consider a class of third-order nonlinear ODE
\begin{equation}\label{4.29}
	u'''=-xu'^4 u''^3+uu'^3u''^3.
\end{equation}
The function $f(x,u,p,q)=-xp^4q^3+up^3q^3$ fulfills the condition stated in Theorem \ref{Th3.2} and is thus equivalent to the canonical form $\bar{u}'''=\bar{a}^3(\bar{x})\bar{u},$ with four symmetries under contact transformation.\\

Using $I_1=3p^3q^2(px-u),~I_2=0,~J_3=-pq,~I_4=-p$,
~$I_5=-\frac{1}{p^2},~I_7=0,\\~Q=0,~K=-\frac{3}{p^4}$,  evaluated for equation (\ref{4.29}), we obtain the transformation through the following steps:
\begin{description}
\item[Step 1] The auxiliary funcion $H=\frac{1}{p^2},$ is a solution for  system (\ref{4.10}).
\item[Step 2] The auxiliary funcion $b=p,$ is a solution for  system (\ref{4.11}).
\item[Step 3] The auxiliary funcion $a_1=1,$ is a solution for  system (\ref{4.12}).
\item[Step 4] The solution to system(\ref{4.13}) is
$\varphi =p$.
\item[Step 5] The solution to system(\ref{4.14}) is
$\chi=-x,~\psi=u-px$.
\end{description}
Moreover, the function $\bar{a}(\bar{x})$ of the resulting canonical form is given by  $b(x,u,p)=\bar{a}(\bar{x})$ as $\bar{a}(\bar{x})=\bar{a}(p)=p$. Therefore, $\bar{a}(\bar{x})=\bar{x}$ and the canonical form $\bar{u}'''=\bar{x}^3\bar{u}$ can be obtained for the ODE (\ref{4.29}) via the contact transformation 
\begin{equation}
\varphi \left( x,u,p \right)=p,~\psi\left( x,u,p \right)=u-xp,~\chi\left( x,u,p \right)=-x.
\end{equation}
\end{example}
\begin{remark}
The function $f(x,u,p,q)=-xp^4q^3+up^3q^3$  does not satisfy the conditions of  Theorem 3 in \cite{Dweik2018_2}. So the ODE $u'''=-xu'^4 u''^3+uu'^3u''^3$ is not equivalent to the canonical form $\bar{u}'''=\bar{a}(\bar{x})^3\bar{u}$,
 under point transformation.
\end{remark}
\section{Conclusion}
A central contribution of this work is a novel framework for implementing Cartan’s method. By systematically branching through relative invariants and simplifying structures via the introduction of a chain of auxiliary functions, we provide an efficient way to manage the complexity and expression growth inherent in Cartan’s method.

Using this framework, we study  the equivalence problem of linearizable third-order ODEs admitting four- and five-dimensional Lie symmetry algebras under contact transformations. The proposed approach yields invariant coframes that fully characterize the corresponding canonical forms and provide a clear geometric distinction between the two symmetry cases. 

In addition, we provide a constructive procedure for obtaining contact transformations that reduce a given equation to its linear canonical form. This procedure is explicitly realized through the integration of associated systems of linear or Riccati PDEs derived from the invariant coframes. Unlike earlier approaches based on direct solution of nonlinear PDE systems, our method is algorithmic in nature.

This work extends previous results obtained under point transformations to more general setting of contact transformations. The framework developed here provides an effective computational pathway for constructing the corresponding transformations.
\subsection*{Acknowledgements}
The authors would like to express their gratitude to Birzeit University for the support and facilities provided during this research. FM thanks Wits for support.\\\\
The authors declare that no competing interests exist.


\begin{thebibliography}{99}
\bibitem{Lie}  {Lie S. Klassifikation und integration von gewönlichen differentialgleichungen zwischen $x, y$, die eine Gruppe von Transformationen gestaten. Arch Math 1883; VIII(IX): 187.}
\bibitem{Lie*} {Lie S. Arch Mat Nat 1883; 8:371427 . (Reprinted in Lies Gessammelte Abhandlundgen, 5, 1924, paper XIY, pp 362427.}
\bibitem{Mahomed} {Mahomed F. M. Point symmetry group classification of ordinary differential equations: a survey of some results. Mathematical Methods in the Applied Sciences (2007) 30: 1995–2012.}
\bibitem{Lie1}  Lie S., Over en classe geometriske Transformationer, Doctoral Thesis, University of Christiana, 1871.
\bibitem{Lie2}  Lie S., Begrndung einer Invariantentheorie der Berhrungstransformationen, Mathematische Annalen 8, 1874, 215288.
\bibitem{Lie3}  Lie S. and Engel, F., Theorie der Transformationsgruppen, B. G. Teubner, Leipzig, Vol. 1, 1888.
\bibitem{Lie4}  Lie S. and Engel, F., Theorie der Transformationsgruppen, B. G. Teubner, Leipzig, Vol. 2, 1890.
\bibitem{Lie5} Lie S. and Engel, F., Theorie der Transformationsgruppen, B. G. Teubner, Leipzig, Vol. 3, 1893.
\bibitem{Lie6} Lie S. and Scheffers, G., Vorlesugen ber Differentialgleichungen mit bekanten infinitesimalen Transformationen, B. G. Teubner, Leipzig, 1891.
\bibitem{Yumaguzhin1997} Yumaguzhin, Valeriy A. Contact classification of 3rd-order linear ODEs', The Diffeity Institute Preprint Series. 1997.
\bibitem{Yumaguzhin1996} Yumaguzhin, Valeriy A. "Classification of 3rd order linear ODE up to equivalence." Differential Geometry and its Applications 6, no. 4 (1996): 343-350.
\bibitem{Soh2002} Wafo Soh, Célestin,  Mahomed,  Fazal M.  and  Qu. C. "Contact symmetry algebras of scalar ordinary differential equations." Nonlinear Dynamics 28 (2002): 213-230.
\bibitem{Svi1} Svishchevskii, S. R., Lie-B\"acklund symmetries of linear ODEs and invariant linear spaces, in Modern Group Analysis, G. N. Yakovenko (ed.), Institute for Mathematical Modelling, Russian Academy of Sciences, Moscow, 1993, pp. 324.
\bibitem{Svi2}  Svishchevskii, S. R., Lie-B\"acklund symmetries of linear ODEs and generalized separation of variables in nonlinear equations, Physics Letters A 199, 1995, 344-348.
\bibitem{Ibragimov1977} Ibragimov, N. H.,  Khalique, C. M. and  Mahomed, F. M. "All linear ordinary differential equations admitting contact symmetries." In Proceedings of the International Conference at the Sophus Lie Centre, pp. 155-159. Mars Publishers, Symmetri Foundation, Trondheim, 1997.
\bibitem{Mahomed1990} Mahomed,  Fazal M. and  Leach,  P. G. L. "Symmetry Lie algebras of nth order ordinary differential equations." Journal of Mathematical Analysis and Applications 151, no. 1 (1990): 80-107.
\bibitem{Chern1940} Chern, S. S.  The geometry of the differential equation $y'''=F(x,y,y,y'')$, Sci. Rep. Nat. Tsing Hua Univ. 4 (1940), 97-111.
\bibitem{Neut2002} Neut, Sylvain and  Petitot Michel. "La géométrie de l'équation $y'''= f (x, y, y', y'')$." Comptes rendus. Mathématique 335, no. 6 (2002): 515-518.
\bibitem{Wnschmann1905}  W$\ddot{\textrm{u}}$nschmann, K. $\ddot{\textrm{U}}$ber Beruhrungsbedingungen bei Differentialgleichchungen, Enzyklop$\ddot{\textrm{a}}$die der Math. Wiss. {\bf3}, (1905), 490-492.
\bibitem{Ibra} Ibragimov, N. H. and Meleshko,  V. S. Linearization of third-order ordinary differential equations by point and contact transformations. J Math Anal Appl 2005;308:266–89 .
\bibitem{Dweik2019} Al-Dweik  Ahmad, Y.,  Mahomed, Fazal M.,and  Mustafa, Muhammad T. "Invariant characterization of third-order ordinary differential equations $u'''= f (x, u, u', u'')$ with five-dimensional point symmetry group." Communications in Nonlinear Science and Numerical Simulation 67 (2019): 627-636.
\bibitem{Dweik2018_2} Al-Dweik, Ahmad Y.,  Mustafa, M. T.,  Mahomed, Fazal M. and Rajai, S. Alassar. "Linearization of third‐order ordinary differential equations via point transformations." Mathematical Methods in the Applied Sciences 41, no. 16 (2018): 6955-6967.
\bibitem{Olver1995} Olver, P. J. Equivalence, Invariants and Symmetry, Cambridge University Press, Cambridge, 1995.                           
\bibitem{Neut2003} Neut, S.  Implantation et nouvelles applications de la méthode d'équivalence de Cartan. Phd thesis, Univ. Lille I, 2003.
\end{thebibliography}
\end{document}